\renewcommand\appendix{\par
  \setcounter{section}{0}
  \setcounter{subsection}{0}
  \setcounter{figure}{0}
  \setcounter{table}{0}
  \renewcommand\thesection{Appendix \Alph{section}}
  \renewcommand\thefigure{\Alph{section}\arabic{figure}}
  \renewcommand\thetable{\Alph{section}\arabic{table}}
}
\def\author#1{\gdef\autrun{\def\and{\unskip, }#1}\gdef\@author{#1}}
\def\address#1{{\def\and{\\\hspace*{18pt}}\renewcommand{\thefootnote}{}%
\footnote {#1}}%
\markboth{\autrun}{\titrun}}
\def\keywords#1{\par\medskip
\noindent\textbf{Keywords.} #1}
\newcommand{\ch}{\mathrm{ch}}
\newcommand{\pp}{\mathbf {P}^2}
\newcommand{\Hom}{\mathrm{Hom}}
\newcommand{\ext}{\mathrm{ext}}
\newcommand{\Ext}{\mathrm{Ext}}
\newcommand{\Coh}{\mathrm{Coh}}
\newcommand{\Arg}{\mathrm{Arg}\text{ }}
\newcommand{\cccp}{\{1,\frac{v_1}{v_0},\frac{v_2}{v_0}\}\text{-plane}}
\newcommand{\bi}{q>\frac{1}{2}s^2}
\def\Ker{\mathrm{Ker}}
\renewcommand\appendix{\par
  \setcounter{section}{0}
  \setcounter{subsection}{0}
  \setcounter{figure}{0}
  \setcounter{table}{0}
  \renewcommand\thesection{Appendix \Alph{section}}
  \renewcommand\thefigure{\Alph{section}\arabic{figure}}
  \renewcommand\thetable{\Alph{section}\arabic{table}}
}
\def\H{\ensuremath{\mathbb{H}}}
\def\cH{\ensuremath{\mathcal H}}
\newtheorem{theorem}{Theorem}[section]
\newtheorem{defn}[theorem]{Definition}
\newtheorem{lemma}[theorem]{Lemma}
\newtheorem{rem}[theorem]{Remark}
\newtheorem{pd}[theorem]{Proposition and Definition}
\newtheorem*{theorem*}{Theorem}
\title{Smoothness and Poisson structures of Bridgeland moduli spaces on Poisson surfaces}
\author{Chunyi Li and Xiaolei Zhao}
\date{\today}
\begin{document}
\maketitle
\begin{abstract}
Let $X$ be a projective smooth holomorphic Poisson surface, in other words, whose anti-canonical divisor is effective. We show that moduli spaces of certain Bridgeland stable objects on $X$ are smooth. Moreover, we construct Poisson structures on these moduli spaces.
\keywords{
Poisson structure, Stability condition, Moduli of complexes}
\end{abstract}

\section*{Introduction}
It is proved by Mukai in \cite{Muk} that the moduli space of stable sheaves on an abelian or a projective K3 surface is smooth and has a natural symplectic structure. This construction has been generalized in two directions. On the one hand, the symplectic structure can be generalized to (holomorphic) Poisson structures. In the paper \cite{Tyu}, the author showed that a Poisson structure on the surface will naturally determine an antisymmetric bivector field on the moduli space of stable sheaves. Bottacin \cite{Bo} then proved that such a bivector field satisfies the closure condition and endows the moduli space with a natural Poisson structure.

On the other hand, instead of coherent sheaves, one may consider moduli spaces of objects in D$^b(X)$, the bounded derived category of coherent sheaves on the surface $X$. These moduli spaces attract much attention recent years, mainly based on the development of Bridgeland stability conditions. Among many applications, these moduli spaces provide interesting birational models of moduli spaces of sheaves. Generalizing Mukai's result, Inaba \cite{In} proved that when $X$ is an abelian or a projective K3 surface, the moduli space of objects $E$ in D$^b(X)$ satisfying Ext$^{-1}(E, E) = 0$ and Hom$(E, E) =\mathbb C$ is smooth and can be equipped with a symplectic structure, hence a holomorphic symplectic manifold.

In this paper we provide a unified generalization of these two directions.

\begin{theorem*}[Theorem \ref{thm:smoothness} and \ref{thm:poissonclosure}]
For a smooth projective surface $X$ equipped with a Poisson structure $s \in H^0(X, -K_X)$, the moduli space of stable objects\footnote{Strictly speaking, we only consider the Bridgeland stability condition given by the tilting construction with respect to a polarization $H$ parallel to $K_X$. See Definition \ref{defn:geostabsq} for details.} in D$^b(X)$ is smooth and is endowed with a Poisson structure $\theta_s$ induced by $s$.
\label{maintro}
\end{theorem*}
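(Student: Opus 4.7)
The theorem comprises two essentially independent statements -- smoothness of $\mathcal{M}$ and the existence of a Poisson bivector on it -- and I would handle them in that order.

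For smoothness I would rely on the deformation theory of complexes in $\mathrm{D}^b(X)$ due to Inaba and Lieblich. For any Bridgeland stable object $E$ in the tilting heart, stability gives $\Hom(E,E)=\C$, while $\Ext^{-1}(E,E)=0$ because $E$ lies in the heart of a bounded $t$-structure. Hence $\mathcal{M}$ is locally a scheme with tangent $\Ext^1(E,E)$ and obstructions in $\Ext^2(E,E)$, and it suffices to show every obstruction class vanishes. The Poisson section, viewed as a morphism $s\colon K_X\to \mathcal O_X$, induces $s_*\colon \Ext^2(E,E\otimes K_X)\to\Ext^2(E,E)$; by Serre duality $\Ext^2(E,E\otimes K_X)\cong \Hom(E,E)^\vee=\C$, and dually one obtains a canonical trace-like map $\sigma_s\colon \Ext^2(E,E)\to\C$. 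I would combine an Artamkin-type result (the trace of the Kuranishi obstruction vanishes) with an analysis showing that $\sigma_s$ detects the remaining piece of the obstruction, forcing $o(E)=0$.

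For the Poisson structure I would follow Bottacin's construction, lifted to the derived category. Serre duality identifies $T^*_{[E]}\mathcal{M}$ with $\Ext^1(E,E\otimes K_X)$, and the map $s_*\colon \Ext^1(E,E\otimes K_X)\to\Ext^1(E,E)=T_{[E]}\mathcal{M}$ defines the contraction $\theta_s^\sharp$ of the desired bivector. Antisymmetry of $\theta_s^\sharp$ with respect to the Serre pairing follows from graded commutativity of Yoneda products in degree one together with the cyclic property of the trace. To globalize on $\mathcal{M}$, I would use a (possibly twisted) universal family $\mathcal{E}$ on $\mathcal{M}\times X$ and the relative Atiyah/Kodaira--Spencer class to patch the pointwise construction into a section of $\wedge^2 T_{\mathcal{M}}$.

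The step I expect to be the main obstacle is the closure condition $[\theta_s,\theta_s]=0$ (Schouten bracket / Jacobi identity). On $X$ itself $[s,s]=0$ is automatic because $\wedge^3 T_X=0$ on a surface; the plan is to exhibit $[\theta_s,\theta_s]$, via the universal Atiyah class, as the functorial image of $[s,s]$ and thereby deduce its vanishing. The derived-categorical version of Bottacin's sheaf-level calculation is delicate because Massey and $A_\infty$-products on $\mathrm{RHom}(E,E)$ can contribute terms one must show cancel. The tilting heart hypothesis and the Bogomolov inequality presumably enter here to keep the relevant Ext groups under control, so that the calculation ultimately reduces to the classical surface picture.
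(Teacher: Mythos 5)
There is a genuine gap in your smoothness argument, and it sits exactly at the point the paper identifies as the main difficulty. You propose to show only that the obstruction \emph{class} $o(E)\in\Ext^2(E,E)$ vanishes, by combining an Artamkin-type statement (the trace of the obstruction vanishes) with the single functional $\sigma_s$ dual to $s_*\colon \Ext^2(E,E\otimes K_X)\to \Ext^2(E,E)$. This cannot work in general: $\Ext^2(E,E)$ has no a priori dimension bound, the trace map lands in $H^2(\mathcal O_X)$, which is zero for most Poisson surfaces (e.g.\ all rational ones), and $\sigma_s$ is one more linear functional --- two functionals cannot force an element of a high-dimensional space to be zero. The paper instead proves the much stronger statement $\Ext^2(E,E)=0$, i.e.\ $\Hom(E,E\otimes K_X)=0$ by Serre duality. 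For stable \emph{sheaves} this is immediate from slope comparison since $H.K_X<0$, but for Bridgeland stable objects $E\otimes K_X$ need not be stable for the same stability condition, so no direct phase comparison is available. The paper's resolution (Lemma \ref{ph}, due to Bayer) is to observe that $-\otimes K_X$ translates both $v(E)$ and the stability condition along parabolas $q-\tfrac12 s^2=\mathrm{const}$, and to bound the phases of \emph{all} $\sigma_Q$-stable factors of $E$ under this deformation between $\phi_Q(A)$ and $\phi_Q(B)$, forcing $\phi_Q(E\otimes K_X)$ below every such factor and hence $\Hom(E,E\otimes K_X)=0$. Your proposal contains no mechanism playing this role, and without it the smoothness claim is unproven.

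On the Poisson structure your bivector $\theta_s$, its antisymmetry via the graded trace on a locally free resolution, and the identification $T^*M\cong\mathcal Ext^1_{\mathcal O_M}(\mathscr E,\mathscr E\otimes q^*K_X)$ all agree with the paper. But for the closure condition you only sketch a strategy (realize $[\theta_s,\theta_s]$ as the functorial image of $[s,s]=0$ via the Atiyah class) and you yourself flag it as the main obstacle; as stated it is a plan, not a proof, and the $A_\infty$-correction terms you mention are precisely what must be controlled. The paper avoids this entirely: since Bottacin's reduction to the open locus of locally free sheaves is unavailable for complexes, it takes a locally free resolution $V^\bullet\to\mathscr E$ of the universal family, represents tangent vectors by $D_u(d^i)$ and cotangent vectors by cocycles $\alpha^i$ with $D_{B(\alpha)}(d^i)=s\alpha^i$, and verifies $\tilde d\theta(\alpha,\beta,\gamma)=0$ by a direct term-by-term cancellation using the antisymmetry lemma. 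You would need to either carry out your Atiyah-class argument in full (controlling the higher products) or switch to such an explicit cocycle computation.
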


There are two new features in our theorem. First, Inaba's smoothness result only requires Ext$^{-1}(E, E) = 0$ and Hom$(E, E) =\mathbb C$. However, in our situation there is no natural numerical condition on the objects to guarantee the smoothness of the moduli spaces. Instead, we need to work with Bridgeland stable objects in an essential way. Note that for a stable object $E$, $E\otimes K_X$ may not be stable with respect to the same stability condition. So different from the sheaf case, the smoothness of moduli of stable objects does not directly follow from Serre duality and slope comparison. Our method generalizes our earlier work \cite{LZ1, LZ2} on $\pp$, but avoids the use of full strong exceptional collections, which exist for $\pp$ but not necessarily for general Poisson surfaces. The current method is suggested by Arend Bayer.

Second, in order to check the closure condition of the Poisson structures, \cite{Bo} reduced the question to an open dense subset parametrizing locally free sheaves. For moduli spaces of stable objects, such open sets may not exist. Instead we compute the deformation theory of objects in terms of complexes, and show the closure condition.\\

\textbf{Future work.} In a series of celebrated works \cite{BM,BM2}, the authors prove that the minimal model program of the moduli space of coherent sheaves on projective K3 surfaces can be run on the space of Bridgeland stability conditions via wall-crossing. One of the main technical point in the work is the so called positivity lemma, i.e., to show that each moduli space of Bridgeland stable objects carries a canonically nef divisor. An analogue result is also achieved for Abelian surfaces in \cite{Yos,MYY} by showing the positivity lemma using Fourier-Mukai transforms. 

Generalizing these results to other surfaces becomes difficult. Besides the positivity lemma, it involves at least two extra difficulties. First, it is not clear in general whether the moduli space still behaves nicely after wall-crossing. For example, higher dimensional component may appear after wall-crossing, and this leads to reducible moduli spaces with bad singularities. Secondly, it is not known in general for which Chern classes there exist stable sheaves. This makes it hard to decide when the moduli space is non-empty, and to give a criterion on the actual walls for the moduli spaces.

Based on previous work \cite{ABCH,CHW,DP}, we solve these problems, and generalize the result in \cite{BM,BM2} to the projective plane in \cite{LZ1,LZ2}. The next natural step is to consider Poisson surfaces. In particular, this paper treats with the first difficulty as mentioned above, and it is the starting point of future study on the MMP for moduli spaces of sheaves on a Poisson surface via wall-crossing.

In another direction of a slightly different flavor, \cite{Hi} provides a systematic way to deform the complex structure on a holomorphic Poisson variety. In the case of moduli spaces of sheaves on a Poisson surface $X$, these Poisson deformations produce new varieties that can be realized as moduli spaces of objects on a `non-commutative' surface. In the ideal cases, stability conditions exist for these `non-commutative' surfaces, and one can run MMP for these deformed moduli spaces via wall-crossing. The models appearing in this procedure are expected to correspond to the Poisson deformations of moduli spaces of Bridgeland stable objects on the original surface $X$, with respect to the Poisson structures we construct in this paper. An example of this appeared in \cite{LZ1}, and our result in the current paper can be used to study the general case.\\

\textbf{Acknowledgments.} The authors are greatly indebted to Arend Bayer for his tremendous assistance. In particular, Lemma \ref{ph} and \ref{lemma:extvanishing} are suggested by him, and first appear in his talk in the workshop ``Derived Categories and Moduli Spaces" at University of Stavanger.  We are grateful to  Wanmin Liu and Emanuele Macr\`{\i} for helpful conversations. Chunyi Li is supported by ERC starting grant no. 337039 ``WallXBirGeom''.

 We would like to thank Sergey Mozgovoy for pointing out a mistake in the first and journal version of this paper. In the previous version, our main result was stated for every polarization $H$. However, the equation \eqref{eq2.3}  fails in general if $H$ is not parallel to $K_X$. As a consequence, the proof of Lemma \ref{lemma:extvanishing} is not valid and we do not see an easy way to fix it for all $H$. Our result only holds for $H$ that is numerically parallel to $K_X$.

\section*{Notation}
Throughout the paper we will work over the complex number field $\mathbb C$. All results may hold for algebraically closed field of characteristic $0$. The only necessary change in the argument is in the last section, where analytic neighborhood should be replaced by small {\'e}tale sites. We will leave this for the readers to check.

\section{Stability conditions}
\subsection{Geometric stability conditions}

In this section we introduce Bridgeland stability conditions on surfaces. Let $(X,H)$ be a polarized smooth projective surface, where $H$ is an ample divisor on $X$. Let $D\in $Pic$_{\mathbb R}(X)$ be a divisor with real coefficient satisfying $H.D=0$. Consider the bounded derived category $\mathrm D^b(X)$ of coherent sheaves on $X$. 

Now let $V$ be a three dimensional real vector space. Denote the Grothendieck group of $\mathrm D^b(X)$ by K$(X)$, and the twisted Chern character $\exp(-D).\ch$ by $\ch^D$. For any object $F$ in $\mathrm D^b(X)$, we write
\[\tilde{v}(F)=(v_0(F),v_1(F),v_2(F)):=\big(H^2.\ch_0^D(F),H.\ch_1^D(F),\ch_2^D(F)\big).\]
This defines a map $\tilde{v}: \mathrm{K}(X)\to V$.

Now we consider the real projective space $\mathbf P(V)$ with homogeneous coordinate $[v_0,v_1,v_2]$, we view the locus $v_0=0$ as the line at infinity. The complement forms an affine real plane, which is referred to as the $\cccp$. We call $\mathbf P(V)$ the projective $\cccp$. When $\tilde{v}(F)\neq 0$, we use $v(F)$ to denote the corresponding point in the projective $\cccp$. In particular, when $\ch_0(F)\neq 0$, $v(F)$ is in the (affine) $\cccp$.

\begin{rem}
In this article, in all arguments on the $\cccp$,
we assume the $\frac{v_1}{v_0}$-axis to be horizontal and the
$\frac{v_2}{v_0}$-axis to be vertical. The term `above' means `$\frac{v_2}{v_0}$ coordinate is greater than'. Other terms such as `below', `to the right' and `to the left' are understood in the similar way. In this paper we always use $l$ to denote a line segment and $L$ to denote a line. 

\label{rem:cccp}
\end{rem}

Now, we follow \cite{BM, Bri08} and recall the construction of geometric stability conditions on $X$.




\begin{defn}
A stability condition $\sigma$ on $\mathrm D^b(X)$ is called
\emph{geometric} if it satisfies the support property and all
skyscraper sheaves $k(x)$ are $\sigma$-stable of the same phase. \label{defn:stabgeo}
\end{defn}

For a torsion-free sheaf $F$, recall that the (H-)slope of $F$ is given by $\frac{H.\ch_1^D(F)}{H^2.\ch_0^D(F)}=\frac{H.\ch_1(F)}{H^2.\ch_0(F)}$. Fix a real number $s$, a torsion pair of coherent sheaves on $X$ is given by:
\begin{itemize}
\item[] $\Coh_{\leq s}$: subcategory of $\Coh(X)$ generated by
$H$-semistable sheaves of slope $\leq s$.
\item[] $\Coh_{>
s}$: subcategory of $\Coh(X)$ generated by $H$-semistable
sheaves of slope $> s$ and torsion sheaves.
\end{itemize}
We may define the tilting heart $\Coh_{\# s}$ $:=$ $\langle\Coh_{\leq s}[1]$, $\Coh_{>
s}\rangle$.

\begin{pd}
For two real number $s$ and $q$ such that $\bi$, there exists a geometric stability condition
$\sigma_{s,q}:=(Z_{s,q},\Coh_{\#s})$ on $\mathrm D^b(X)$, where the central charge is given by
\[ Z_{s,q}(E):=(-v_2(E)+q\cdot v_0(E))+ i(v_1(E)-s\cdot v_0(E)).\]
In this case, $\Ker(Z_{s,q})$ consists of the characters corresponding to the point $(1,s,q)$ in the projective $\cccp$. We write $\phi_{s,q}$ for the phase function of $\sigma_{s,q}$.
\label{defn:geostabsq}
\end{pd}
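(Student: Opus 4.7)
The plan is to verify the four axioms that define a geometric Bridgeland stability condition: that $Z_{s,q}$ is a stability function on $\Coh_{\# s}$, that Harder--Narasimhan filtrations exist, that the support property holds, and that every skyscraper sheaf is $\sigma_{s,q}$-stable of the same phase. This is the classical tilt-stability construction of Bridgeland and Arcara--Bertram, and the role of the hypothesis $\bi$ is geometric: it places the point $(1,s,q)$ strictly above the Bogomolov parabola in the $\cccp$, which is precisely what is needed to keep $\Ker Z_{s,q}$ away from Chern characters of semistable sheaves.

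First I would check that $Z_{s,q}(E)$ lies in the open upper half plane or on the negative real axis for every nonzero $E \in \Coh_{\# s}$. The heart is generated by torsion sheaves, $H$-semistable torsion-frees $F$ of slope $>s$, and shifts $G[1]$ of $H$-semistables $G$ of slope $\leq s$. For $F$, $\mathrm{Im}\,Z_{s,q}(F) = v_1(F) - s\,v_0(F) > 0$. For a torsion sheaf $T$, $\mathrm{Im}\,Z_{s,q}(T) = v_1(T) \geq 0$, and if it vanishes then $T$ is zero-dimensional and $\mathrm{Re}\,Z_{s,q}(T) = -v_2(T) < 0$. For $G[1]$, $\mathrm{Im}\,Z_{s,q}(G[1]) \geq 0$; when $G$ has slope exactly $s$, the Bogomolov inequality $v_1(G)^2 \geq 2 v_0(G) v_2(G)$ together with $\bi$ yield
\[
\mathrm{Re}\,Z_{s,q}(G[1]) \,=\, v_2(G) - q\,v_0(G) \,\leq\, \left(\tfrac{s^2}{2} - q\right) v_0(G) \,<\, 0.
\]
This is where $\bi$ enters essentially.

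Next I would handle the Harder--Narasimhan property and the support property together. For HN, one verifies that $\Coh_{\# s}$ is Noetherian and that $\mathrm{Im}\,Z_{s,q}$ takes values in a discrete subset of $\R$ on the numerical Grothendieck group, so that Bridgeland's general HN existence theorem applies; Noetherianity of the tilt follows from Noetherianity of $\Coh(X)$ together with boundedness of slope-semistable sheaves of fixed numerical class. For the support property I would exhibit the Bogomolov discriminant $Q(v) = v_1^2 - 2 v_0 v_2$ as a quadratic form negative on the kernel $\Ker Z_{s,q} = \R \cdot (1,s,q)$, since $Q(1,s,q) = s^2 - 2q < 0$, and propagate the classical Bogomolov--Gieseker inequality from $H$-semistable torsion-free sheaves to all $\sigma_{s,q}$-semistable objects by induction on HN length.

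Finally, geometricity is immediate at the level of phases: $\tilde v(k(x)) = (0,0,1)$ and $Z_{s,q}(k(x)) = -1$, so every skyscraper sheaf has phase $1$. For stability of $k(x)$, I would take any short exact sequence $0 \to A \to k(x) \to B \to 0$ in $\Coh_{\# s}$, pass to the long exact sequence of $\Coh(X)$-cohomology with respect to the tilt,
\[
0 \to H^{-1}(B) \to H^0(A) \to k(x) \to H^0(B) \to 0,
\]
and use that $H^{-1}(B) \in \Coh_{\leq s}$ and $H^0(A) \in \Coh_{> s}$ have trivial intersection, together with simplicity of $k(x)$ in $\Coh(X)$, to force $A \in \{0, k(x)\}$. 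The main obstacle I expect is the support property: one must upgrade Bogomolov--Gieseker from coherent sheaves to the tilted heart, which requires the standard but delicate analysis of how $Q$ behaves under short exact sequences in $\Coh_{\# s}$ and how it interacts with HN filtrations of $\sigma_{s,q}$-semistable complexes.
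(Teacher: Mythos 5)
The paper gives no proof of this Proposition--Definition at all: it simply refers the reader to Bayer--Macr\`{\i} (Corollary 4.6) and Bridgeland's K3 paper, and your outline is exactly the standard tilt-stability argument carried out in those references --- the positivity check on the three types of generators (where $q>\tfrac12 s^2$ enters via Bogomolov plus Hodge index), the quadratic form $v_1^2-2v_0v_2$ negative on $\Ker Z_{s,q}$ for the support property, and the cohomology long exact sequence for stability of skyscrapers are all the right steps. One caveat worth fixing: for irrational $s$ the image of $\mathrm{Im}\,Z_{s,q}$ on the numerical Grothendieck group is \emph{not} discrete, so the Harder--Narasimhan property cannot be obtained the way you state it; the cited proofs either establish HN first for rational parameters and extend to irrational ones by deforming within the stability manifold (using the support property), or replace discreteness by a finer chain-condition argument on the Noetherian tilted heart.
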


For the proof that $\sigma_{s,q}$ is indeed a geometric stability condition, we refer to \cite{BM} Corollary 4.6 and \cite{Bri08}. Here the phase function $\phi_{s,q}$ can be also defined for objects in
$\Coh_{\#s}$: \[\phi_{s,q}(E) := \frac{1}{\pi} \Arg(Z_{s,q}(E)).\] It is
well-defined in the sense that it coincides with the phase function on
$\sigma_{s,q}$-semistable objects.

\begin{rem}
The definition of  $\sigma_{s,q}$ here, though appears in a different from, is essentially the same as the usual one such as that in \cite{ABCH}. We refer to \cite{LZ2} for a detailed comparison between these two setups.

\label{rem:anglephase}
\end{rem}

\begin{rem}
Given a point $P=(1,s,q)$ with $\bi$, we will also write $\sigma_P$, $\Coh_{P}(X)$ and $Z_P$ for the stability condition $\sigma_{s,q}$, the tilt heart $\Coh_{\#s}(X)$ and the central charge $Z_{s,q}$ respectively. 
\label{rem:sigmap}
\end{rem}

\subsection{Potential walls and phases}\label{sec1.3}
We collect some well-known and useful results about the potential walls in this section. First we have the following description of the potential wall, i.e. the locus of stability conditions for which two given characters are of the same slope.

\begin{lemma}[Potential walls]
 Let $P=(1,s,q)$ be a point with $\bi$; $E$ and $F$ be two objects in $\mathrm{Coh}_{P}(X)$ such that $\tilde{v}(E)$ and $\tilde{v}(F)$ are not zero, then
\begin{center}
complex numbers $Z_P(E)$ and $Z_P(F)$ are on the same ray
\end{center}
if and only if
\begin{center}
$v(E)$, $v(F)$ and $P$ are collinear in the projective $\cccp$.
\end{center}
If $F$ is a $\sigma_P$-stable object, then two potential walls of it do not intersect in the region $\bi$, unless they are identical.
\label{lemma:paraandspanplane}\end{lemma}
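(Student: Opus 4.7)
For the first part (collinearity $\Leftrightarrow$ same ray) my plan is a direct linear-algebra computation. Writing $Z_P(E)=x_E+iy_E$ with $x_E=-v_2(E)+qv_0(E)$ and $y_E=v_1(E)-sv_0(E)$ (and similarly for $F$), two nonzero vectors in $\mathbb R^2$ lie on a common line through the origin iff the determinant $x_Ey_F-x_Fy_E$ vanishes. I will expand this and verify that it equals, up to an overall sign, the $3\times 3$ determinant
\[
\det\begin{pmatrix} v_0(E) & v_1(E) & v_2(E)\\ v_0(F) & v_1(F) & v_2(F)\\ 1 & s & q\end{pmatrix},
\]
whose vanishing is exactly the collinearity of $v(E)$, $v(F)$, $P$ in $\mathbf P(V)$. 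To upgrade ``same line'' to ``same ray'', I will use the observation that in $\mathrm{Coh}_{\# s}$ the phase of any nonzero object lies in $(0,1]$, so $Z_P(E)$ and $Z_P(F)$ (both nonzero by the assumption $\tilde v\neq 0$ together with $Z_P\neq 0$, which holds because $v(E),v(F)$ are distinct from $P$ unless on this common line) must lie in the same half-plane; hence collinearity forces same direction.

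For the second part my plan is to describe potential walls of $F$ geometrically. By the first part, any potential wall of $F$ is the locus of points $P'=(1,s',q')$ lying on a fixed projective line through $v(F)$ in $\mathbf P(V)$. Intersecting with the affine slice $\{v_0=1\}$, this yields either a line in the $(s,q)$-plane passing through $v(F)$ (when $v_0(F)\neq 0$) or a family of parallel lines (when $v_0(F)=0$, so $v(F)$ is at infinity). In the parallel case distinct walls are disjoint; in the other case two distinct walls can meet only at $v(F)$ itself.

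It therefore suffices to show that $v(F)$ does not lie in the open region $q>\tfrac12 s^2$. This is precisely the Bogomolov-type inequality $v_1(F)^2\ge 2v_0(F)v_2(F)$, equivalently $q_F\le \tfrac12 s_F^2$ whenever $v_0(F)\neq 0$, which is part of the support property built into $\sigma_P$ (and ultimately comes from the classical Bogomolov inequality, preserved under the tilt construction; see \cite{BM}). The point $v(F)=P$ is excluded because it would force $Z_P(F)=0$, contradicting $\sigma_P$-stability.

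The main obstacle I anticipate is not the linear algebra, which is routine, but cleanly bookkeeping the boundary cases: $v_0(F)=0$ (where $v(F)$ is at infinity and the geometric picture of ``lines through $v(F)$'' becomes ``parallel family''), and the possibility that $Z_P(E)$ or $Z_P(F)$ lies on the negative real axis (phase $1$), which is the one case where the ``same line implies same ray'' step requires an extra word because the opposite ray (phase $0$) is the only direction outside $\mathrm{Coh}_{\# s}$ that could in principle be realized. Once these boundary cases are handled the proof reduces to the two pictures above.
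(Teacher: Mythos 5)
Your proposal is correct and follows essentially the same route as the paper: the first equivalence reduces to a linear combination of $\tilde v(E)$ and $\tilde v(F)$ lying in $\Ker Z_P$ (your determinant identity), with the heart condition supplying the ray-versus-line refinement, and the second part uses the Bogomolov inequality to place $v(F)$ in the closed region $q\leq \tfrac{1}{2}s^2$, so that two walls, being lines through $v(F)$, cannot meet in the open region $q>\tfrac{1}{2}s^2$. If anything you are more explicit than the paper about the same-ray step and the $v_0(F)=0$ boundary case, but the underlying argument is identical.
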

\begin{proof}
$Z_P(E)$ and $Z_P(F)$ are on the same ray if and only if $Z_P(a\tilde{v}(E)-b\tilde{v}(F))$  $=$ $0$ for some
$a,b\in\mathbb R_+$. This happens only when $v(E)$, $v(F)$ and $\Ker Z_P$ are collinear in the projective $\cccp$.

For the second statement, note that by the Bogomolov inequality for $\sigma_P$-stable objects, we have
\[
(H^2.\ch_0^D(F))\cdot\ch_2^D(F)\leq (H.\ch_1^D(F))^2.
\]
So $v(F)$ is in the region $q \leq \frac{1}{2}s^2$, and by the first statement, this is the only intersection point of potential walls of $F$.
\end{proof}

Note that this statement holds even when $E$, $F$ are torsion, i.e. $\ch_0=0$. The second statement is first observed by Bertram, and appears in print in \cite{Maci}.

We make some notations for lines and rays on the (projective) $\cccp$. Consider objects $E$ and $F$ such that $\tilde v(E)$ and $\tilde v(F)$ are not zero, and let $\sigma_{s,q}=\sigma_P$ be a geometric stability condition. Let $L_{EF}$ be the straight line on the projective $\cccp$ across $v(E)$    and $v(F)$. $L_{EP}$, also denoted as $L_{E\sigma}$, is the line across $v(E)$ and $P$. We use $l_{EF}$ to denote the line segments on the $\cccp$ when $v(E)$ and $v(F)$ are not at infinity. 

$\mathcal H_P$ is the right half plane with either $\frac{v_1}{v_0}>s$, or $\frac{v_1}{v_0}=s$ and $\frac{v_2}{v_0}>q$. $l_{PE}^+$ is the ray along $L_{PE}$ starting from $P$ and completely contained in $\mathcal H_P$. $L_{E\pm}$ is the vertical wall $L_{E(0,0,1)}$. $l_{E+}$ is the vertical ray along $L_{E(0,0,1)}$ from $E$ going upward. $l_{E-}$ is the vertical ray along $L_{E(0,0,-1)}$ from $E$ going downward.

\begin{lemma}
Let $P = (1,s,q)$ be a point with $\bi$, $E$ and
$F$ be two objects in $\Coh_{\#s}$. The inequality
\[\phi_{s,q}(E)>\phi_{s,q}(F)\] holds if and only if the ray $l^+_{PE}$ is
above $l^+_{PF}$. \label{lemma:slopecompare}
\end{lemma}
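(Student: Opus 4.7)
The plan is to reduce both sides of the claimed equivalence to the same comparison of polar angles for an auxiliary pair of direction vectors based at $P$. I will introduce $(a_E, b_E) := (v_1(E) - s v_0(E),\, v_2(E) - q v_0(E))$, which rewrites the central charge as
\[ Z_{s,q}(E) = -b_E + i a_E = i(a_E + i b_E). \]
Since $E \in \Coh_{\#s}$, the imaginary part $a_E$ is nonnegative (with $b_E > 0$ in the boundary case $a_E = 0$), so $a_E + i b_E$ lies in the closed right half plane with argument in $(-\pi/2, \pi/2]$. It follows that $\Arg Z_{s,q}(E) > \Arg Z_{s,q}(F)$ iff the polar angle of $(a_E, b_E)$ strictly exceeds that of $(a_F, b_F)$, both measured from the horizontal axis.

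Next I will identify the direction of $l^+_{PE}$ with the vector $(a_E, b_E)$. The subtlety is that while $l^+_{PE}$ is required to lie in $\mathcal H_P$, the point $v(E)$ itself may lie to the right or the left of $P$ depending on the sign of $v_0(E)$, so a short case analysis is needed. When $v_0(E) > 0$, the affine displacement $v(E) - P$ equals $(a_E, b_E)/v_0(E)$ with $a_E \geq 0$, so the ray toward $v(E)$ already lies in $\mathcal H_P$; when $v_0(E) = 0$, the point $v(E)$ is at infinity in projective direction $(v_1(E), v_2(E)) = (a_E, b_E)$; when $v_0(E) < 0$, the displacement $v(E) - P$ is proportional to $-(a_E, b_E)$, so the ray from $P$ toward $v(E)$ lies in the left half plane and $l^+_{PE}$ is the opposite ray, which again points in the direction $(a_E, b_E)$. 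In each case $a_E \geq 0$ confirms this direction lies in $\mathcal H_P$.

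Combining the two steps closes the argument: under the convention of Remark \ref{rem:cccp}, the statement that $l^+_{PE}$ is above $l^+_{PF}$ means exactly that the direction $(a_E, b_E)$ has strictly greater polar angle than $(a_F, b_F)$, which we already identified as equivalent to $\phi_{s,q}(E) > \phi_{s,q}(F)$. There is no deep obstacle in the proof; the only step that really requires care is the case $v_0(E) < 0$, where $v(E)$ and the ray $l^+_{PE}$ point in opposite senses from $P$, and the auxiliary vector $(a_E, b_E)$ is precisely the device that makes all three sign cases uniform.
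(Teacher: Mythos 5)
Your argument is correct and is essentially the paper's proof written out in full: the paper's one-line observation is that the angle between $l^+_{PE}$ and the downward vertical ray $l_{P-}$ equals $\pi\phi_{s,q}(E)$, and your identification of the direction of $l^+_{PE}$ with $(a_E,b_E)$ together with $Z_{s,q}(E)=i(a_E+ib_E)$ is precisely that observation, with the sign cases on $v_0(E)$ made explicit.
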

\begin{proof}
By the formula of $Z_{s,q}$, the angle between the
rays $l^+_{PE}$ and $l_{P-}$ at the point $P$ is
$\pi\phi_{s,q}(E)$. The statement follows from this observation.
\end{proof}


\begin{center}

\tikzset{%
    add/.style args={#1 and #2}{
        to path={%
 ($(\tikztostart)!-#1!(\tikztotarget)$)--($(\tikztotarget)!-#2!(\tikztostart)$)%
  \tikztonodes},add/.default={.2 and .2}}
}

\begin{tikzpicture}[domain=2:1]
\newcommand\XA{0.1}
\newcommand\obe{-0.3}

\coordinate (E) at (-3,-2);
\node  at (E) {$\bullet$};
\node [below] at (E) {$E$};

\coordinate (F) at (3,-1);
\node [left] at (F) {$F$};
\node at (F) {$\bullet$};

\coordinate (P) at (0.5,1.5);
\node [above] at (P) {$P$};
\node at (P) {$\bullet$};

\draw [dashed] (P) -- (F);
\node at (2,0.5) {$l^+_{PF}$};

\draw  (P) -- (E);
\draw [add =0 and 0.8,dashed] (E) to (P) node [right]{$l^+_{PE}$};




\draw[->] [opacity=\XA] (-4,0) -- (,0) node[above right] {}-- (4,0) node[above right, opacity =1] {$\frac{v_1}{v_0}$};

\draw[->][opacity=\XA] (0,-2.5)-- (0,0) node [above right] {O} --  (0,5) node[right, opacity=1] {$\frac{v_2}{v_0}$};

\draw [thick](-3,4.5) parabola bend (0,0) (3,4.5) node [above, opacity = 1] {$\frac{1}{2}s^2-q=0$};

\end{tikzpicture} 

Figure: comparing phases at $\sigma_P$.
\end{center}


\section{Smoothness of Bridgeland moduli spaces}
\subsection{Bounds on phases of stable factors}

In this section we prove a lemma on bounding phases of stable factors of a given object when deforming the stability condition. This is first proved for $\pp$ in \cite{LZ1,LZ2}, and the current version of the lemma, which works for the general situation, is suggested to us by Bayer.

\begin{lemma}[Bayer]
Suppose $P$ and $Q$ are two points in the $\cccp$ with $\bi$. Let $E$ be a $\sigma_P$-stable object in $\Coh_{P}$, $A$ and $B$ be the intersection points of the line $L_{v(E)P}$ and the parabola $q=\frac{1}{2}s^2$. Denote the stable factors  of $E$ with respect to $\sigma_Q$ by $E_i$, then for each factor, the phase $\phi_Q(E_i)$ lies in between $\phi_Q (A)$ and $\phi_Q(B)$.
\label{ph}
\end{lemma}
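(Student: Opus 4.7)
The plan is to deform $\sigma_P$ continuously to $\sigma_Q$ through the region $\{q>\frac{1}{2}s^2\}$ of the $\cccp$ and to track the stable factors of $E$ throughout. First I would fix a generic continuous path $\gamma\colon[0,1]\to\cccp$ with $\gamma(0)=P$ and $\gamma(1)=Q$ lying above the parabola; by genericity, $E$ crosses only finitely many walls along $\gamma$, and between walls the multiset of stable factors of $E$ (the Jordan--H\"older factors of its Harder--Narasimhan pieces at $\sigma_{\gamma(t)}$) is fixed, with phases varying continuously in $t$.

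At $t=0$ the only stable factor is $E$ itself, and the collinearity of $v(E), A, B, P$ on the $\cccp$ combined with Lemma \ref{lemma:paraandspanplane} gives that $Z_P(A), Z_P(B), Z_P(E)$ lie on a common ray, so $\phi_P(A)=\phi_P(B)=\phi_P(E)$. I would then set up continuous lifts: for each $t$, let $\phi^+_t$ and $\phi^-_t$ be the maximum and minimum of $\phi_{\gamma(t)}(E^t_i)$ over the stable factors of $E$ at $\sigma_{\gamma(t)}$, and let $\Phi^+_t, \Phi^-_t$ be the corresponding maximum and minimum of the two continuous lifts of $\phi_{\gamma(t)}(A)$ and $\phi_{\gamma(t)}(B)$ starting from their common value at $t=0$.

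The core claim is the chain of inequalities $\Phi^-_t\leq\phi^-_t\leq\phi^+_t\leq\Phi^+_t$ for all $t\in[0,1]$; this holds as an equality at $t=0$, and the conclusion at $t=1$ is the lemma. To propagate the claim, suppose it fails first at some $t_0>0$. By continuity some stable factor $E_{i_0}$ must satisfy $\phi_{\gamma(t_0)}(E_{i_0})=\phi_{\gamma(t_0)}(B)$, say. Lemma \ref{lemma:paraandspanplane} then forces $v(E_{i_0})$, $v(B)$, and $\gamma(t_0)$ to be collinear on the $\cccp$, i.e.\ $\gamma(t_0)$ lies on the potential wall $L_{v(E_{i_0})v(B)}$ of $E_{i_0}$. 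Since $v(B)$ lies on the parabola and $v(E_{i_0})$ lies on or below it by Bogomolov for the stable $E_{i_0}$, the non-intersection of distinct potential walls of the $\sigma_{\gamma(t_0)}$-stable $E_{i_0}$ above the parabola (the second statement of Lemma \ref{lemma:paraandspanplane}) controls the local behavior of $\phi_{\gamma(t)}(E_{i_0})-\phi_{\gamma(t)}(B)$ near $t_0$ and shows the inequality cannot strictly cross.

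The main obstacle is the wall-crossing step, where the multiset of stable factors itself changes discretely: a current stable factor $G$ may destabilize into $G_1,\dots,G_m$ with $\sum\tilde v(G_j)=\tilde v(G)$. By Lemma \ref{lemma:paraandspanplane} the characters $v(G_j)$ all lie on a common line through $\gamma(t_\ell)$, and by continuity each $\phi_{\gamma(t_\ell)}(G_j)$ equals $\phi_{\gamma(t_\ell)}(G)$ at the wall, which is already in the bound $[\Phi^-_{t_\ell},\Phi^+_{t_\ell}]$. The delicate point will be to argue, using Bogomolov and the line geometry on the $\cccp$, that none of the $G_j$ can escape the bound infinitesimally after the wall, so that the inductive propagation carries through to $t=1$.
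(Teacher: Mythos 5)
Your overall plan (deform from $P$ to $Q$, track stable factors across finitely many walls) is the same as the paper's, but the invariant you propagate is different and the key step is not established. The paper deforms along the straight segment $l_{PQ}$ and measures everything from the \emph{fixed} endpoint $Q$: at the first destabilizing point $R$, the characters of the new stable factors lie on $L_{v(E)R}\cap\{q\le\tfrac12 s^2\}$, a union of two rays emanating from the parabola, and the content of the proof is the planar containment of these rays in the double sector at $Q$ bounded by $l^{\pm}_{QA}$ and $l^{\pm}_{QB}$; this containment is then iterated. You instead propagate the condition that the phases \emph{at the moving point} $\gamma(t)$ stay in the moving interval $[\Phi^-_t,\Phi^+_t]$, and your argument for non-crossing does not work: the equality $\phi_{\gamma(t_0)}(E_{i_0})=\phi_{\gamma(t_0)}(B)$ says precisely that $\gamma(t_0)$ lies on the potential wall $L_{v(E_{i_0})v(B)}$, and if $\gamma$ crosses that line transversally (which nothing prevents for a generic path), the sign of $\phi_{\gamma(t)}(E_{i_0})-\phi_{\gamma(t)}(B)$ flips, so the inequality \emph{does} strictly cross. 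The second statement of Lemma \ref{lemma:paraandspanplane} only says that two walls of $E_{i_0}$ do not meet above the parabola; it gives no information about the sign of this phase difference on the two sides of a single wall, so citing it here does not close the gap.

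The second gap is that you explicitly defer (``the delicate point will be to argue\dots'') the control of the factors $G_1,\dots,G_m$ produced at a wall, but this is exactly where the lemma's content lives. Knowing that $\phi_{\gamma(t_\ell)}(G_j)=\phi_{\gamma(t_\ell)}(G)$ at the wall puts the $G_j$ inside the (interior of the) interval only instantaneously; to control them afterwards you need the quantitative statement that their characters lie on the two rays of $L_{v(G)\gamma(t_\ell)}$ cut off by the parabola, and that these rays sit inside the relevant sector based at the terminal point $Q$ --- a nested-sector statement you never formulate. I would suggest reorganizing along the paper's lines: restrict to the segment $l_{PQ}$, and at each destabilization point prove the containment of the two rays (equivalently, of the new interval $[\phi_Q(A'),\phi_Q(B')]$ determined by the new chord of the parabola through $v(G)$ and the wall point) inside $[\phi_Q(A),\phi_Q(B)]$, then induct on the finitely many walls.
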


\begin{rem}
The phase $\phi_Q(A)$ is the real number $\frac{1}{\pi}\Arg({l^+_{PQ},l_{Q-}})$ up to an integer. It is explicitly given by 
\[\phi_P(E)+\frac{1}{\pi}\tilde{\Arg}_A({l_{AQ},l_{AP}}).\]
Here $\tilde{\Arg}_A({l_{AQ},l_{AP}})$ is the degree of the rotation from  $l_{AQ}$ to $l_{AP}$ clockwisely, and it belongs to $(-\pi,\pi)$.
\end{rem}

\begin{proof}
We will focus on the case when $P$ and $Q$ are both to the left of $v(E)$, and $Q$ lies below the line $L_{v(E)P}$. The other  cases can be proved similarly. Also assume that $A$ is the left intersection point and $B$ is the right one. 


\begin{center}

\tikzset{%
    add/.style args={#1 and #2}{
        to path={%
 ($(\tikztostart)!-#1!(\tikztotarget)$)--($(\tikztotarget)!-#2!(\tikztostart)$)%
  \tikztonodes},add/.default={.2 and .2}}
}

\begin{tikzpicture}[domain=2:1]
\newcommand\XA{0.1}
\newcommand\obe{-0.3}
\coordinate (P1) at (-2.5,3.125);
\node [below] at (P1) {$A$};
\node at (P1) {$\bullet$};

\coordinate (Q1) at (1.5,1.125);
\node [above] at (Q1) {$B$};
\node at (Q1) {$\bullet$};

\draw [add =0 and 0.5] (P1) to (Q1) node{$\bullet$} node [above]{$E$} coordinate (E);

\coordinate (P) at (0,1.875);
\node [above] at (P) {$P$};
\node at (P) {$\bullet$};

\coordinate (R) at (0.1,1);
\node [above] at (R) {$R$};
\node at (R) {$\bullet$};

\draw [add =0 and 0.8] (P) to (R) node{$\bullet$} node [below]{$Q$} coordinate (Q);
\draw [add =-1.2 and 1.1,dashed] (E) to (R) node{} node [below]{$l^-$};
\draw [add =-.2 and 0.2,dashed] (R) to (E) node{} node [below]{$l^+$};
\draw [add =-.7 and -0.3,dashed] (R) to (E) coordinate (ER) node{$\bullet$} node [below]{$E^R_i$};
\draw [add =0 and 1.2] (Q) to (Q1) node{} node [above]{$l^+_{QB}$};
\draw [add =-1 and 0.6] (P1) to (Q) node{} node [below]{$l^+_{QA}$};
\draw [add =0 and .2] (Q) to (P1) node{} node [below]{$l^-_{QA}$};
\draw [add =-1 and 1.6] (Q1) to (Q) node{} node [below]{$l^-_{QB}$};
\draw[dashed] (Q)--(ER);

\draw[->] [opacity=\XA] (-4,0) -- (,0) node[above right] {}-- (4,0) node[above right, opacity =1] {$\frac{v_1}{v_0}$};

\draw[->][opacity=\XA] (0,-1.5)-- (0,0) node [above right] {O} --  (0,4) node[right, opacity=1] {$\frac{v_2}{v_0}$};

\draw [thick](-2.7,3.645) parabola bend (0,0) (2.7,3.645) node [above, opacity =1] {$\frac{1}{2}s^2-q=0$};
\end{tikzpicture} \\
Deforming $P$ to $Q$.
\end{center}

Deform the stability condition along the line segment $l_{PQ}$. If $E$ remains stable at $\sigma_Q$, then by the picture and Lemma \ref{lemma:slopecompare}, the statement holds clearly.

If $E$ is destabilized at certain point $R$ on $l_{PQ}$, we consider any stable factor $E^R_i$ of $E$ with respect to $\sigma_R$. By Lemma \ref{lemma:paraandspanplane}, $v(E^R_i)$ lies on the line $L_{v(E)R}$. By the Bogomolov inequality, $v(E^R_i)$ lies in the region $q \leq \frac{1}{2}s^2$. So $v(E^R_i)$ lies on the two doted rays $l^-$ and $l^+$. Note that it is clear from the picture $l^-$ is contained in the region between rays $l_{QA}^-$ and $l_{QB}^-$, and $l^+$ is contained in the region between rays $l_{QA}^+$ and $l_{QB}^+$. So by Lemma \ref{lemma:slopecompare}, we know that $\phi_Q(E^R_i)$ lies between $\phi_Q (A)$ and $\phi_Q(B)$. 

Now continue deforming the stability condition along $l_{RQ}$, and repeat the argument for $E^R_i$. This completes the proof.
\end{proof}

\subsection{Smoothness}

In this section we prove the smoothness of moduli spaces of stable objects on surface $X$ whose canonical bundle has certain negativity. We fix an ample divisor $H$ and a real divisor $D$ with $K_X=tH$ and $H.D=0$. Through this section, we assume that $K^2_X<0$. Note that this condition always holds when $-K_X$ has nontrivial sections.

We first have the following lemma.

\begin{lemma}
For a stability condition $\sigma_P=\sigma_{s,q}$ with $\bi$, and a $\sigma_P$-stable object $E$, we have that
\[\Ext^2(E,E)=0.\]
\label{lemma:extvanishing}
\end{lemma}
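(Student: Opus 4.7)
The plan is to derive the vanishing from Serre duality and a phase comparison using Lemma~\ref{ph}. By Serre duality on the smooth surface $X$,
\[\Ext^{2}(E,E) \;\cong\; \Hom(E, E\otimes K_X)^{*},\]
so it suffices to show $\Hom(E, E\otimes K_X) = 0$. Since $E$ is $\sigma_P$-stable, this follows once every $\sigma_P$-Harder--Narasimhan factor of $E\otimes K_X$ has phase strictly smaller than $\phi_P(E)$.

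To establish this, I observe that the autoequivalence $-\otimes K_X$ of $\mathrm{D}^{b}(X)$ fixes skyscraper sheaves, so it carries the geometric stability condition $\sigma_P$ to another geometric stability condition $\sigma_{P^{\dagger}}$ (in the sense of Definition~\ref{defn:stabgeo}), under which $E \otimes K_X$ is stable. A direct computation with the central charge, absorbing the $H$-perpendicular part of $K_X$ into the twist divisor $D$ if necessary, identifies $\sigma_{P^{\dagger}}$ with a tilted stability condition attached to a point $P^{\dagger}$ in the (re-twisted) $\cccp$ whose $s$-coordinate equals $s + H.K_X/H^{2}$. Applying Lemma~\ref{ph} to $E\otimes K_X$ with initial stability $\sigma_{P^{\dagger}}$ and target $\sigma_P$ bounds the $\sigma_P$-phases of the stable factors of $E\otimes K_X$ between $\phi_P(A^{\dagger})$ and $\phi_P(B^{\dagger})$, where $A^{\dagger},B^{\dagger}$ are the intersections of the line $L_{v(E\otimes K_X)P^{\dagger}}$ with the parabola $q=\tfrac12 s^{2}$.

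What remains is the strict inequality $\phi_P(A^{\dagger}),\phi_P(B^{\dagger}) < \phi_P(E)$. Because $H.K_X < 0$, both $v(E\otimes K_X)$ and $P^{\dagger}$ sit strictly to the left of $v(E)$ and $P$ respectively by the same positive amount $|H.K_X|/H^{2}$; the line $L_{v(E\otimes K_X)P^{\dagger}}$ is the image of $L_{v(E)P}$ under the twist map, and when $K_X$ is numerically proportional to $H$ this twist preserves the parabola, so $A^{\dagger},B^{\dagger}$ are the leftward images of the parabola intersections $A,B$ of $L_{v(E)P}$. A short computation using the strict convexity of the parabola then shows that the rays $l^{+}_{PA^{\dagger}}$ and $l^{+}_{PB^{\dagger}}$ lie strictly below $l^{+}_{Pv(E)}$, which by Lemma~\ref{lemma:slopecompare} gives the desired strict phase inequalities.

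I expect the main obstacle to be this last geometric comparison when $\mathrm{NS}(X)$ has rank $\geq 2$. In that case the twist action on the projective $\cccp$ is no longer a pure horizontal translation, since the $v_2/v_0$-shift depends also on the $H$-perpendicular component of $\mathrm{ch}_1(E)$ and of $K_X$; one must decompose $K_X = (H.K_X/H^{2})H + K_X^{\perp}$ and invoke the Hodge index inequality $(H.K_X)^{2}\geq K_X^{2}H^{2}$ to control the resulting distortion while preserving the strict inequality $\phi_P(A^{\dagger}),\phi_P(B^{\dagger}) < \phi_P(E)$.
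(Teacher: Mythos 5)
Your overall strategy coincides with the paper's: reduce via Serre duality to $\Hom(E,E\otimes K_X)=0$, use the fact that $-\otimes K_X$ transports $\sigma_P$ to a translated geometric stability condition, and conclude with Lemma \ref{ph} plus a phase comparison in the $\cccp$. (You apply Lemma \ref{ph} to $E\otimes K_X$, deforming from $P^{\dagger}$ back to $P$; the paper applies it to $E$, deforming from $P$ to the translated point $Q=P^{\dagger}$. This is an immaterial symmetry.) The gap is in your final step. You justify $\phi_P(A^{\dagger}),\phi_P(B^{\dagger})<\phi_P(E)$ by asserting that the rays $l^{+}_{PA^{\dagger}}$ and $l^{+}_{PB^{\dagger}}$ lie below $l^{+}_{Pv(E)}$ and invoking Lemma \ref{lemma:slopecompare}. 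That lemma compares phases of objects of the heart, taking values in $(0,1]$, whereas the stable factors of $E\otimes K_X$ with respect to $\sigma_P$ may be shifts of objects of $\Coh_P$, and the bounds produced by Lemma \ref{ph} are the \emph{normalized} phases of the remark following it, which track the continuous deformation and can leave $(0,1]$. Since $H.K_X<0$ pushes $v(E\otimes K_X)$, $A^{\dagger}$, $B^{\dagger}$ to the left, one of these points can cross the vertical line through $P$; there the ray $l^{+}$ flips to the other side and no longer records the normalized phase (for instance, if $B^{\dagger}$ sits directly below $P$, its $l^{+}$-phase is $1$ while its normalized phase is $0$). So the ``short computation using convexity of the parabola'' does not exist in the form you describe: the conclusion is true, but only after the bookkeeping is done with the normalization of Lemma \ref{ph}. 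This is exactly why the paper's proof proceeds by cases — it first checks whether the two chords $l_{AB}$ and $l_{A'B'}$ intersect (in which case both objects are stable at a common $\sigma_R$ and a direct slope comparison applies), reduces the case where $P$ lies to the right of $v(E)$ to the other one via the shifted derived dual $\mathbb D(E)=E^{\vee}[2]$, and perturbs when $\mu_H(E)=s$. You need either this case analysis or an honest computation with normalized phases.

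A second, smaller issue: your worry about $\mathrm{NS}(X)$ of rank $\geq 2$ is legitimate — the transported stability condition lives in the slice twisted by $D+K_X^{\perp}$ rather than by $D$, so Lemma \ref{ph} cannot be applied verbatim to the pair $(P^{\dagger},P)$ inside a single $\cccp$ — but you only announce a remedy via the Hodge index inequality without carrying it out, so this part of the argument remains a plan rather than a proof.
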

\begin{proof}
By Serre duality, $\Ext^2(E,E)=\Hom(E,E\otimes K)$. We have that 
\[v_1(E\otimes K)=v_1(E)+H.K.\]
Since $H=tK_X$, we have
\begin{align}\label{eq2.3}
    v_2(E\otimes K)-\frac{1}{2}v_1(E\otimes K)^2=v_2(E)-\frac{1}{2}v_1(E)^2.
\end{align}
So the point $v(E\otimes K)$ is by moving $v(E)$ to the left by $-H.K$ along the parabola $q-\frac{1}{2}s^2=C$, where $C=v_2(E)-\frac{1}{2}v_1(E)^2$. 

Also move the point $P$ to the left by $-H.K$ along the parabola of the form $q-\frac{1}{2}s^2=C'$ passing through $P$, and denote this new point by $Q$. It follows from the definition of stability conditions that $E\otimes K$ is $\sigma_Q$-stable. 

\begin{center}
\tikzset{%
    add/.style args={#1 and #2}{
        to path={%
 ($(\tikztostart)!-#1!(\tikztotarget)$)--($(\tikztotarget)!-#2!(\tikztostart)$)%
  \tikztonodes},add/.default={.2 and .2}}
}

\begin{tikzpicture}[domain=2:1]
\newcommand\XA{0.1}
\newcommand\obe{-0.3}
\newcommand\cc{0.2}

\coordinate (E) at (3,2.5);
\node  at (E) {$\bullet$};
\node [below] at (E) {$E$};

\coordinate (E1) at (0.3,-1.955);
\node  at (E1) {$\bullet$};
\node [below] at (E1) {$E\otimes K$};

\coordinate (A) at (0.4,0.08);
\node  at (A) {$\bullet$};
\node [below] at (A) {$A$};

\coordinate (B) at (1.46,1.065);
\node  at (B) {$\bullet$};
\node [below] at (B) {$B$};

\coordinate (T) at (-1.2,0.72);
\node  at (T) {$\bullet$};
\node [below] at (T) {$B'$};

\coordinate (T) at (-2.4,2.88);
\node  at (T) {$\bullet$};
\node [below] at (T) {$A'$};

\draw (A) -- (B) -- (E);

\draw [add =0 and -0.3] (A) to (B) coordinate (P) node {$\bullet$} node[above] {$P$};

\coordinate (Q) at (-1.56,1.417);
\node  at (Q) {$\bullet$};
\node [above] at (Q) {$Q$};

\draw [add =0 and 0.7] (Q) to (B)  node {} node[right] {$\phi_Q(B)$};
\draw [add =0 and 1,dashed] (Q) to (A)  node {} node[right] {};
\draw [add =-1 and 2] (A) to (Q)  node {} node[above] {$\phi_Q(A)$};
\draw [add =0.7 and 0.1] (Q) to (E1)  node {} node[right] {};


\draw[->] [opacity=\XA] (-4,0) -- (,0) node[above right] {}-- (4,0) node[below right, opacity =1] {$\frac{v_1}{v_0}$};

\draw[->][opacity=\XA] (0,-2.5)-- (0,0) node [above right] {O} --  (0,4) node[right, opacity=1] {$\frac{v_2}{v_0}$};

\draw [thick](-3,4.5) parabola bend (0,0) (3,4.5) node [right, opacity =1] {$\frac{1}{2}s^2-q=0$};
\draw [thick,dashed](-3.5,4.125) parabola bend (0,-2) (3.5,4.125) node [right, opacity =1] {$\frac{1}{2}s^2-q=C$};
\draw [thick,dashed](-3,4.5+\cc) parabola bend (0,0+\cc) (3,4.5+\cc) node [left, opacity =1] {$C'$};
\end{tikzpicture} \\
Compare the slopes of $\phi_Q(E)_{\min}$ and $\phi_Q(E\otimes K)$.
\end{center}

Now we are ready to prove the lemma. We first treat the case when $P$ is to the left of $v(E)$. Denote the intersection points of $L_{v(E)P}$ and $q-\frac{1}{2}s^2=0$ by $A$ and $B$;  and denote the intersection points of $L_{v(E\otimes K)Q}$ and $q-\frac{1}{2}s^2=0$ by $A'$ and $B'$. If the line segments $l_{AB}$ and $l_{A'B'}$ have an intersection $R$, then both $E$ and $E\otimes K$ are $\sigma_R$-stable. In addition, by \ref{lemma:slopecompare}, $\phi_R(E)>\phi_R(E\otimes K)$. Therefore,  
\[\Hom(E,E\otimes K)=0.\]
In the case that the line segments $l_{AB}$ and $l_{A'B'}$ do not intersect each other, $B'$ is to the left of $A$. It is easy to see from the picture that $\phi_Q(E\otimes K)$ is smaller than both $\phi_Q(A)$ and $\phi_Q(B)$. By Lemma \ref{lemma:extvanishing}, the stable factors of $E$ with respect to $\sigma_Q$ have phases between $\phi_Q(A)$ and $\phi_Q(B)$.  So we must have
\[\Hom(E,E\otimes K)=0.\]

If $P$ is to the right of $v(E)$, we consider the shifted derived dual $\mathbb D(E):= E^{\vee}[2]$. It is a standard result (see for example \cite{BM2}) that $\mathbb D(E)$ is stable with respect to $\sigma_{-s,q}$, with the same $H$ and $D$ replaced by $-D$. Now we reduce to the first case, and have that
\[\Hom(\mathbb D(E),\mathbb D(E)\otimes K)=\Hom(E,E\otimes K)=0.\]
In the case that  the slope of $E$ is $s$, by the locally finiteness of walls, there is an open neighborhood of $P$ such that for any $P'$ in the neighborhood, $E$ is $\sigma_{P'}$-stable. So we finish the proof.
\end{proof}

Now we can state our first main theorem.

\begin{theorem}
For a stability condition $\sigma_P=\sigma_{s,q}$ with $\bi$, the moduli stack of $\sigma_P$-stable objects with a given character is smooth.
\label{thm:smoothness}
\end{theorem}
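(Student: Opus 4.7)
The plan is a short reduction to derived-category deformation theory, with the key vanishing $\Ext^2(E,E)=0$ already supplied by Lemma \ref{lemma:extvanishing}.

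First I would recall that $\mssq$ is an algebraic stack locally of finite type over $\C$; this follows from Lieblich's general moduli theory of universally gluable simple complexes together with the openness of Bridgeland stability. At any closed point $[E] \in \mssq$, $\sigma_P$-stability together with the existence of Jordan--H\"older filtrations forces $\Hom(E,E) = \C$, and since $E$ lies in the heart $\Coh_P(X)$ of a bounded t-structure we automatically have $\Ext^{<0}(E,E) = 0$. So every closed point of $\mssq$ parametrizes a simple, universally gluable complex, and the stack is well-behaved there.

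Next I would invoke the standard derived deformation theory (as used by Inaba in his K3 paper, and earlier by Lieblich): the Zariski tangent space to $\mssq$ at $[E]$ is canonically identified with $\Ext^1(E,E)$, and the obstructions to extending an $n$-th order deformation to $(n+1)$-st order lie in $\Ext^2(E,E)$. Consequently, smoothness of $\mssq$ at $[E]$ reduces to the single vanishing $\Ext^2(E,E) = 0$.

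Finally, Lemma \ref{lemma:extvanishing} supplies exactly this vanishing for every $\sigma_P$-stable $E$. Since $[E]$ was arbitrary, $\mssq$ is smooth. The genuine technical content lives entirely in Lemma \ref{lemma:extvanishing}, whose proof combined Serre duality with a careful deformation of the stability condition, moving $P$ to a point $Q$ along which both $E$ and $E \otimes K_X$ remain stable, and then the phase comparisons provided by Lemmas \ref{lemma:slopecompare} and \ref{ph}. Granted that lemma, the present theorem is a short, essentially formal, consequence of the deformation theory of complexes; the only subtleties one must be careful about are correctly citing the algebraicity of $\mssq$ and the precise form of its obstruction theory, both of which are by now standard.
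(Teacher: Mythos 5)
Your proposal is correct and follows essentially the same route as the paper: reduce to the deformation theory of complexes (Inaba, Lieblich), identify the tangent and obstruction spaces with $\Ext^1(E,E)$ and $\Ext^2(E,E)$, and quote Lemma \ref{lemma:extvanishing} for the vanishing of the latter. The paper additionally records that $\Ext^{i}(E,E)=0$ for $i\leq -1$ and $i\geq 3$ and that $\hom(E,E)=1$, so that $\ext^1(E,E)=1-\chi(E,E)$ is constant on the stable locus, but this is a supplementary dimension count rather than a different argument.
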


\begin{proof}
By \cite{Inab, Lieb}, there exists a deformation theory for complexes, similar to the ordinary one for coherent sheaves. In particular, the Zariski tangent space to the moduli space at an object $E$ is given by $\Ext^1(E,E)$, and the obstructions lie in $\Ext^2(E,E)$.

By Lemma \ref{lemma:extvanishing}, we have $\Ext^2(E,E)=0$, so there exists no obstruction class. Since $E$ is in a heart with respect to a t-structure, $\Ext^i(E,E)=0$ for $i\leq -1$. Due to the argument in Lemma \ref{lemma:extvanishing}, $\phi_P(E)>\phi_P(E\otimes K)_{\max}$. By Serre duality, t $\Ext^i(E,E)\simeq (\Hom(E,E\otimes K[2-i]))^* = 0$ for  $i\geq 3$. Since $E$ is stable, we know $\hom(E,E)=1$. So $\ext^1(E,E)=1-\chi(E,E)$ only depends on the character, hence is constant over the stable locus. This proves the smoothness of the moduli stack of stable objects.
\end{proof}

\section{Poisson structures on Bridgeland moduli spaces of Poisson surfaces}
Recall that a (holomorphic) Poisson structure on a compact complex manifold $M$ is given by a bivector field $\theta\in H^0(M, \wedge^2TM)$ satisfying a closure condition. Such a $\theta$ induces a homomorphism of vector bundles $B: T^*M\to TM$, with
\[\langle \theta, \alpha\wedge\beta\rangle = \langle B(\alpha), \beta\rangle\]
for 1-forms $\alpha$, $\beta$. We define an operator $\tilde d: H^0(M, \wedge^2TX) \to H^0(M, \wedge^3TX)$ by
\begin{align*}
\tilde d\theta(\alpha,\beta,\gamma) = &B(\alpha)\theta(\beta,\gamma) + B(\beta)\theta(\gamma, \alpha) + B(\gamma)\theta(\alpha, \beta) \\
&-\langle[B(\alpha),B(\beta)],\gamma\rangle-\langle[B(\beta),B(\gamma)],\alpha\rangle-\langle[B(\gamma),B(\alpha)],\beta\rangle
\end{align*}
for 1-forms $\alpha$, $\beta$, $\gamma$, where $[\cdot,\cdot]$ is the commutator of vector fields. As stated in Proposition 1.1 in \cite{Bo}, the closure condition for $\theta$ is given by 
\[\tilde d \theta=0.\]

Now let $X$ be a smooth projective surface. Since the closure condition holds automatically, $X$ carries a non-zero (holomorphic) Poisson structure if and only if $-K_X$ has sections. Through this section we assume that $X$ is a Poisson surface and $-K_X$ is nontrivial. Moreover, we fix a Poisson structure $s\in H^0(X, -K_X)$.

Choose a geometric stability condition on $X$ as that constructed in Definition \ref{defn:geostabsq}, and let $M$ be the moduli space of semistable objects of a given character. Assume that we are in the situation of Theorem \ref{thm:smoothness}. We want to show that $M$ has a canonical Poisson structure $\theta=\theta_s$.

As shown in \cite{In}, the universal family $\mathscr E$ of $M$ exists in a local analytic neighborhood of $M\times X$. Let $p: M\times X \to M$ and $q: M\times X \to X$ be the projection maps. The relative extension sheaf $\mathcal Ext^1_{\mathcal O_M}(\mathscr E, \mathscr E)$ is independent of the choice of the universal family in local analytic neighborhood, and extends to a globally well-defined sheaf. We have the canonical identification
\[TM\cong \mathcal Ext^1_{\mathcal O_M}(\mathscr E, \mathscr E).\]
Similarly, we have
\[T^*M\cong \mathcal Ext^1_{\mathcal O_M}(\mathscr E, \mathscr E\otimes q^*K_X).\]

In order to define the Poisson structure, for any stable object $E$, consider the following map
\[\theta(E): \Ext^1(E,E\otimes K_X) \times \Ext^1(E,E\otimes K_X) \to\Ext^2(E,E\otimes K_X^2) \xrightarrow{\cdot s} \Ext^2(E,E\otimes K_X) \xrightarrow{\mathrm{Tr}} \mathbb C,\]
where the first map is given by the identification $\Ext^1(E,E\otimes K_X)\cong \Ext^1(E\otimes K_X,E\otimes K_X^2)$ and the Yoneda product, the second map is induced by tensoring $s\in H^0(X, -K_X)$, and the third map is the trace map from Serre duality.

\begin{lemma}
The map $\theta(E)$ is antisymmetric.
\label{ans}
\end{lemma}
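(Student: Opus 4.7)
Proof plan:

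The plan is to deduce antisymmetry from the standard graded-commutativity of the Yoneda-trace pairing. By the naturality of the trace with respect to the morphism $s:K_X^{\otimes 2}\to K_X$ of line bundles, multiplying by $s$ and taking the trace commute:
\[\theta(E)(\alpha,\beta)\;=\;\mathrm{Tr}\bigl(s\cdot(\alpha\cdot\beta)\bigr)\;=\;s_*\mathrm{Tr}(\alpha\cdot\beta),\]
where $\alpha\cdot\beta\in\Ext^2(E,E\otimes K_X^{\otimes 2})$ denotes the Yoneda product (via the identification $\Ext^1(E,E\otimes K_X)\cong\Ext^1(E\otimes K_X,E\otimes K_X^{\otimes 2})$), and $s_*:H^2(X,K_X^{\otimes 2})\to H^2(X,K_X)$ is induced by $s$. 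The analogous identity holds for $\theta(E)(\beta,\alpha)$ with $\alpha\cdot\beta$ replaced by $\beta\cdot\alpha$. Hence antisymmetry of $\theta(E)$ reduces to the identity
\[\mathrm{Tr}(\alpha\cdot\beta)\;=\;-\,\mathrm{Tr}(\beta\cdot\alpha)\;\in\;H^2(X,K_X^{\otimes 2}).\]

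This in turn is a special case of the general graded-commutativity of the trace of Yoneda products: for any line bundles $L,L'$ on $X$ and any classes $\gamma\in\Ext^i(E,E\otimes L)$, $\delta\in\Ext^j(E,E\otimes L')$, one has
\[\mathrm{Tr}(\gamma\cdot\delta)\;=\;(-1)^{ij}\,\mathrm{Tr}(\delta\cdot\gamma)\;\in\;H^{i+j}(X,L\otimes L').\]
Specializing to $L=L'=K_X$ and $i=j=1$ supplies the required minus sign and completes the proof.

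The main technical input is the graded-commutativity displayed above. For a locally free sheaf $E$ this is classical and follows from the Čech/Dolbeault description, in which the Yoneda product becomes a cup product of $\mathcal End(E)$-valued forms and the trace becomes integration of a top wedge form, which is manifestly graded-commutative. For a general object $E\in D^b(X)$ the same argument applies after replacing $E$ by a bounded complex of locally free sheaves; alternatively, one may invoke the compatibility of the trace with the natural map from $\Ext^\bullet(E,E\otimes\_)$ to Hochschild cohomology, whose multiplication is graded-commutative.
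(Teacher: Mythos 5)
Your proposal is correct and follows essentially the same route as the paper: both reduce the antisymmetry to the graded-commutativity of the trace of products, computed after replacing $E$ by a bounded complex of locally free sheaves (the paper carries this out explicitly via the twist-operator diagram of Chapter 10 of Huybrechts--Lehn, with the section $s$ kept inside the trace, while you invoke the same identity $\mathrm{Tr}(\gamma\cdot\delta)=(-1)^{ij}\mathrm{Tr}(\delta\cdot\gamma)$ as a known fact and pull $s$ out by naturality). The sign bookkeeping in degree $(1,1)$ is the same in both arguments.
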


\begin{proof}
By taking a locally free resolution $E^\bullet$ of finite length for the object $E$,  the map $\theta(E)$ is by taking the hypercohomology functor $\mathbb H$ in the degree $(1,1)$ piece on the complexes of sheaves:
\[ \mathcal Hom^\bullet(E^\bullet,E^\bullet\otimes K_X) \times \mathcal Hom^\bullet(E^\bullet,E^\bullet\otimes K_X) \xrightarrow{\circ} \mathcal Hom^\bullet(E^\bullet,E^\bullet\otimes K_X^2) \xrightarrow{\cdot s} \mathcal Hom^\bullet(E^\bullet,E^\bullet\otimes K_X) \xrightarrow{\mathrm{tr}} K_X.\]
As introduced in Chapter 10 \cite{HL}, the trace map is defined by setting $\mathrm{tr}|_{\mathcal Hom(E^i,E^j\otimes K_X)}=0$ when $i\neq j$, and $\mathrm{tr}|_{\mathcal Hom(E^i,E^i\otimes K_X)}=(-1)^i\mathrm{tr}_{E_i}$. For any homogeneous local sections $a$ and $b$ in $\mathcal Hom^\bullet(E^\bullet,E^\bullet\otimes K_X)$, we have
\[\mathrm{tr}\left((a\circ b)\cdot s\right)=(-1)^{\mathrm{deg} \; a\; \mathrm {deg}\; b}\mathrm{tr}\left((b\circ a)\cdot s\right).\]
Now as in the proof of Lemma 10.1.3 \cite{HL}, let $T:A^\bullet\otimes B^\bullet\rightarrow B^\bullet \otimes A^\bullet$ and  $T:\mathbb H(A^\bullet)\otimes \mathbb H(B^\bullet)\rightarrow \mathbb H(B^\bullet) \otimes \mathbb H(A^\bullet)$ be the twist operator $a\otimes b\mapsto (-1)^{\mathrm{deg}\; a\;\mathrm {deg} \; b} b\otimes a$ for homogeneous elements $a$ and $b$. We have the following commutative diagram:

\begin{tikzpicture}
  \matrix (m) [matrix of math nodes,row sep=3em,column sep=3em,minimum width=0em]
  {
     \H(\cH om^\bullet(E^\bullet,E^\bullet\otimes K_X))\otimes \H(\cH om^\bullet(E^\bullet,E^\bullet\otimes K_X)) & \H(\cH om^\bullet(E^\bullet,E^\bullet\otimes K_X))\otimes \H(\cH om^\bullet(E^\bullet,E^\bullet\otimes K_X)) \\
     \H(\cH om^\bullet(E^\bullet,E^\bullet\otimes K_X)\otimes \cH om^\bullet(E^\bullet,E^\bullet\otimes K_X)) & \H(\cH om^\bullet(E^\bullet,E^\bullet\otimes K_X)\otimes \cH om^\bullet(E^\bullet,E^\bullet\otimes K_X)) \\
     \mathrm{H}^2(K_X)  & \mathrm{H}^2(K_X)\\};
  \path[-stealth]
    (m-1-1) edge node [left] {$\mu$} (m-2-1)
            edge node [below] {$T$} (m-1-2)
    (m-2-1) edge node [above] {$\H(T)$} (m-2-2)
    (m-1-2) edge node [right] {$\mu$} (m-2-2)
    (m-2-1) edge node [right] {$\mathrm{tr}\left( (-\circ -)\cdot s\right)$} (m-3-1)
    (m-2-2) edge node [right] {$\mathrm{tr}\left( (-\circ -)\cdot s\right)$} (m-3-2)
    (m-3-1) edge [double] node [above] {id} (m-3-2);
\end{tikzpicture}

We may take the degree $(1,1)$ piece in the first row, then $\H^1(\cH om^\bullet(E^\bullet,E^\bullet\otimes K_X))=$Ext$^1(E,E\otimes K_X)$ and $\theta(E)$ is the composition map on each column. Since the twist operator changes the signs in this case, $\theta(E)$ is anti-symmetric.
\end{proof}

This fiber-wise defined map extended globally by the method analogous to Proposition 2.2 and 2.5 in \cite{Muk} or Proposition 4.1 in \cite{Bo}.  The associated $B: \Ext^1(E,E\otimes K_X)\to \Ext^1(E,E)$ is induced by $s\in H^0(X, -K_X)$.

\begin{theorem}
Adopt the notation as above and the smoothness assumption on $M$, then $\theta$ defines a Poisson structure on $M$.
\label{thm:poissonclosure}
\end{theorem}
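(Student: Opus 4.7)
The plan is to verify the closure condition $\tilde d \theta = 0$ pointwise on the smooth moduli space $M$ by carrying out all computations at the level of a locally free resolution of a stable object $E$. Because $M$ does not in general contain an open dense locus of locally free sheaves, the reduction used by Bottacin in \cite{Bo} is unavailable and the identity must be checked directly for complexes.

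First I fix $E \in M$ and choose a finite-length locally free resolution $E^\bullet$. Following the proof of Lemma \ref{ans} and the conventions in \cite{HL} Chapter 10, all relevant Ext groups are realized as hypercohomology of $\cH om^\bullet(E^\bullet, E^\bullet)$ and its $K_X$-twists, so that the trace, the Yoneda product, and multiplication by the Poisson section $s$ can all be represented explicitly on \v Cech cocycles for a chosen cover of $X$. In this language, for cotangent vectors $\alpha,\beta,\gamma \in \Ext^1(E, E \otimes K_X) \cong T^*_EM$ the induced tangent vector is $B(\alpha) = s \cdot \alpha \in \Ext^1(E, E)$, and the bivector has the closed form $\theta(\beta, \gamma) = \mathrm{tr}\!\left(s \cdot (\beta \circ \gamma)\right)$.

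Next I compute the derivative $B(\alpha)\theta(\beta, \gamma)$ via the Kodaira--Spencer description of tangent directions on moduli of complexes from \cite{In, Lieb}: a first-order deformation in a direction $v \in \Ext^1(E, E)$ changes the cocycle representative of any Ext class by its graded Yoneda commutator with $v$, up to coboundary. Applied with $v = s\alpha$, this turns each of the three cyclic derivative terms into a sum of traces of length-four Yoneda products in $\alpha$, $\beta$, $\gamma$ and two copies of $s$. Symmetrically, each commutator term $\langle [B(\alpha), B(\beta)], \gamma\rangle$ unfolds, via Serre duality, into the trace of $s \cdot \gamma \circ (s\alpha \circ s\beta - s\beta \circ s\alpha)$, which is again a length-four Yoneda expression in the same ingredients.

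Finally I show that the six resulting expressions cancel. This is a purely symbolic computation relying on associativity and graded commutativity of the Yoneda product, the trace identity $\mathrm{tr}(ab) = (-1)^{|a||b|}\mathrm{tr}(ba)$ already exploited in Lemma \ref{ans}, and the fact that the two copies of $s$ sit in even degree and therefore commute past any other factor in the Yoneda chain. The main obstacle is the precise sign bookkeeping, together with a careful justification that the Kodaira--Spencer formula applies in the local analytic setting of \cite{In}, where the universal family $\mathscr E$ exists only on a local analytic neighborhood of $M\times X$; both reduce to verifications on a single locally free resolution, which is precisely the feature that lets the argument bypass the non-existence of a sheaf-theoretic open dense subset of $M$.
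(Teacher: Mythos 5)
Your overall strategy (work on an open set where the universal family exists, choose a resolution, represent everything by explicit cocycles, and cancel using the trace identity and the antisymmetry of Lemma \ref{ans}) is the same as the paper's, but the two computational claims that carry all the content of the argument are not correct as stated, and the cancellation you describe is not the one that actually occurs. First, the assertion that ``a first-order deformation in a direction $v$ changes the cocycle representative of any Ext class by its graded Yoneda commutator with $v$'' is degree-inconsistent: for $v\in\Ext^1(E,E)$ and $\gamma\in\Ext^1(E,E\otimes K_X)$ the graded commutator lives in degree $2$, whereas the derivative of a representative of a $1$-form must again be a degree-$1$ cochain. Moreover, the individual terms $B(\alpha)\theta(\beta,\gamma)$ and $\langle[B(\alpha),B(\beta)],\gamma\rangle$ are not tensorial in $\beta,\gamma$; they genuinely depend on how the $1$-forms extend to a neighborhood, so they cannot be computed from the pointwise classes at a single $E$ by any Yoneda-product formula. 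Second, the Lie bracket of the vector fields $B(\alpha),B(\beta)$ on $M$ is not the Yoneda commutator $s\alpha\circ s\beta - s\beta\circ s\alpha$ (which sits in $\Ext^2(E,E\otimes K_X^2)$, not in the tangent space), so your ``length-four Yoneda product'' expressions for the commutator terms do not even land in the right groups, and the purely symbolic cancellation you invoke cannot go through.

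What the paper does instead, and what your proposal is missing, is the following. It resolves the universal family over $U\times X$ by $V^\bullet=\{V_i\otimes q^*\mathcal O(-m_i)\}$ with $V_i$ genuine vector spaces, so that a vector field $u$ on $U$ acts by an honest derivation $D_u$ on the matrix coefficients of the differentials $d^i$ and of the cocycles $\gamma^i$; the tangent vector is represented by $\{D_u(d^i)\}$, and the crucial identity $D_{B(\alpha)}(d^i)=s\alpha^i$ identifies the Kodaira--Spencer description of $B(\alpha)$ with its definition via $s$. After expanding $\tilde d\theta$ by the Leibniz rule, the second-order terms $D_{B(\alpha)}D_{B(\beta)}(d)$ partially cancel against the commutators $[D_{B(\alpha)},D_{B(\beta)}](d)$, and the residual terms are killed in pairs such as $s\beta^{i+1}\circ D_{B(\alpha)}(\gamma^i)+D_{B(\alpha)}(s\gamma^{i+1})\circ\beta^i$, using (i) the nontrivial verification that $\{D_{B(\alpha)}(\gamma^i)\}$ is itself a cocycle in $\Ext^1_{\mathcal O_U}(\mathscr E,\mathscr E\otimes q^*K_X)$, and (ii) the antisymmetry of $\theta$ evaluated on $\beta$ and this new, a priori unidentified class. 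None of these steps is present in your outline, and without them the reduction to ``associativity and graded commutativity of the Yoneda product'' does not suffice to prove $\tilde d\theta=0$.
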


\begin{proof}
We need to show the closure condition: $\tilde d \theta=0$. As this is a local condition, we only need to prove it in any open set $U$. By possibly shrinking $U$, we can assume that there exists universal family $\mathscr E$ over $U\times X$. By abuse of notations, we still let $p: U\times X \to U$ and $q: U\times X \to X$ be the projection maps. 

Let $\mathcal O(1)$ be an ample line bundle on $X$, such that $\mathcal O(1)\otimes K_X$ is also ample. Consider the ample sequence generated by $\mathcal O(1)$. We can take a resolution $V^\bullet \to \mathscr E$, where $V^\bullet=\{V_i\otimes q^*\mathcal O(-m_i), d^i\}$ and $V_i$ are vector spaces.

For a vector field $u$ on $U$, we would like to express $u$ explicitly under the isomorphism $T_U(U)\simeq \Ext^1_{\mathcal O_U}(\mathscr E, \mathscr E)$. Note that the derivation $D_u: \mathcal O_U\to \mathcal O_U$ can be canonically extended to a derivation
\[D_u: V_i^\vee \otimes V_{i+1}\otimes q^* \mathcal O_X(m_i-m_{i+1}) \otimes p^*\mathcal O_U \to V_i^\vee \otimes V_{i+1}\otimes q^* \mathcal O_X(m_i-m_{i+1}) \otimes p^*\mathcal O_U.\]
So we have a well-defined map $D_u(d^i): V_i\otimes q^*\mathcal O(-m_i) \to V_{i+1}\otimes q^*\mathcal O(-m_{i+1})$. 
It is shown in \cite{In} that
\[d^{i+1}\circ D_u(d^i) + D_u(d^{i+1})\circ d^i = 0,\]
and therefore
\[\{D_u(d^i): V_i\otimes q^*\mathcal O(-m_i) \to V_{i+1}\otimes q^*\mathcal O(-m_{i+1}) \} \in \Ext^1_{\mathcal O_U}(\mathscr E, \mathscr E).\] 

For a 1-form $\alpha$ on $U$, with the given resolution $V^\bullet \to \mathscr E$, $\alpha$ can be represented by
\[\{\alpha^i: V_i\otimes q^*\mathcal O(-m_i) \to V_{i+1}\otimes q^*\mathcal O(-m_{i+1}) \otimes q^* K_X\}.\]
Now there are two ways to represent the vector field $B(\alpha)$. On the one hand, by above discussion, $B(\alpha)$ can be represented by $\{D_{B(\alpha)}(d^i): V_i\otimes q^*\mathcal O(-m_i) \to V_{i+1}\otimes q^*\mathcal O(-m_{i+1}) \}$. On the other hand, by the definition of $B$, $B(\alpha)$ is given by $\{s\alpha^i: V_i\otimes q^*\mathcal O(-m_i) \to V_{i+1}\otimes q^*\mathcal O(-m_{i+1}) \}$. So we have
\[D_{B(\alpha)}(d^i)=s\alpha^i.\]

Now for 1-forms $\alpha$, $\beta$, $\gamma$, we have the following computation
\begin{align*}
\tilde d\theta(\alpha, \beta, \gamma)&=B(\alpha)\langle B(\beta), \gamma \rangle + B(\beta)\langle B(\gamma), \alpha \rangle + B(\gamma)\langle B(\alpha), \beta \rangle \\
&\;\;\;\;-\langle [B(\alpha), B(\beta)], \gamma \rangle -\langle [B(\beta), B(\gamma)], \alpha \rangle -\langle [B(\gamma), B(\alpha)], \beta \rangle \\
&=D_{B(\alpha)}(D_{B(\beta)}(d^{i+1})\circ \gamma^i) + D_{B(\beta)}(D_{B(\gamma)}(d^{i+1})\circ \alpha^i) + D_{B(\gamma)}(D_{B(\alpha)}(d^{i+1})\circ \beta^i)\\
&\;\;\;\;-[D_{B(\alpha)}, D_{B(\beta)}](d^{i+1})\circ \gamma^i -[D_{B(\beta)}, D_{B(\gamma)}](d^{i+1})\circ \alpha^i -[D_{B(\gamma)}, D_{B(\alpha)}](d^{i+1})\circ \beta^i \\
&=D_{B(\beta)}(d^{i+1})\circ D_{B(\alpha)}(\gamma^i) + D_{B(\gamma)}(d^{i+1})\circ D_{B(\beta)}(\alpha^i) + D_{B(\alpha)}(d^{i+1})\circ D_{B(\gamma)}(\beta^i) \\
&\;\;\;\;+ D_{B(\beta)}D_{B(\alpha)}(d^{i+1})\circ \gamma^i + D_{B(\gamma)}D_{B(\beta)}(d^{i+1})\circ \alpha^i + D_{B(\alpha)}D_{B(\gamma)}(d^{i+1})\circ \beta^i.
\end{align*}

Note that $\gamma$ is a $1$-form, we have
\[\gamma^{i+1}\circ d^i + d^{i+1} \circ \gamma^i=0,\]
Applying $D_{B(\alpha)}$, we get
\[D_{B(\alpha)}(\gamma^{i+1})\circ d^i + d^{i+1} \circ D_{B(\alpha)}(\gamma^i) + D_{B(\alpha)}(d^{i+1})\circ \gamma^i + \gamma^{i+1} \circ D_{B(\alpha)}(d^i) = 0.\]
As we have seen, $\{D_{B(\alpha)}(d^i)\} \in \Ext^1_{\mathcal O_U}(\mathscr E, \mathscr E)$. Similar to Lemma \ref{ans}, it is easy to check that
\[D_{B(\alpha)}(d^{i+1})\circ \gamma^i + \gamma^{i+1} \circ D_{B(\alpha)}(d^i) = 0.\]
Hence
\[D_{B(\alpha)}(\gamma^{i+1})\circ d^i + d^{i+1} \circ D_{B(\alpha)}(\gamma^i)=0,\]
and $\{D_{B(\alpha)}(\gamma^i)\} \in \Ext^1_{\mathcal O_U}(\mathscr E, \mathscr E\otimes q^*K_X)$.

Now we have
\begin{align*}
&\;\;\;\;D_{B(\beta)}(d^{i+1})\circ D_{B(\alpha)}(\gamma^i) + D_{B(\alpha)}D_{B(\gamma)}(d^{i+1})\circ \beta^i \\
&=s\beta^{i+1}\circ D_{B(\alpha)}(\gamma^i) + D_{B(\alpha)}(s\gamma^{i+1})\circ \beta^i \\
&=s\big(\beta^{i+1}\circ D_{B(\alpha)}(\gamma^i) + D_{B(\alpha)}(\gamma^{i+1})\circ \beta^i \big) \\
&=0.
\end{align*}
The last vanishing follows from Lemma \ref{ans} of the anti-symmetry of $\theta$. Similarly we have vanishing for the other two pairs in the last expression of $\tilde d\theta(\alpha, \beta, \gamma)$. So we prove that $\tilde d\theta(\alpha, \beta, \gamma)=0$, and $\theta$ is a Poisson structure.
\end{proof}

\bibliographystyle{abbrv}\bibliography{mmpbb}

\begin{thebibliography}{10}


\bibitem[ABCH13]{ABCH}
D.~Arcara, A.~Bertram, I.~Coskun, and J.~Huizenga.
\newblock The minimal model program for the {H}ilbert scheme of points on
  {$\Bbb{P}^2$} and {B}ridgeland stability.
\newblock {\em Adv. Math.}, 235:580--626, 2013.

\bibitem[BM13]{BM}
A.~Bayer and E.~Macr{\`{\i}}.
\newblock M{MP} for moduli of sheaves on {K}3s via wall-crossing: nef and
  movable cones, {L}agrangian fibrations.
\newblock {\em Invent. Math.}, 198(3):505--590, 2014.


\bibitem[BM14]{BM2}
A.~Bayer and E.~Macr{\`{\i}}.
\newblock Projectivity and birational geometry of {B}ridgeland moduli spaces.
\newblock {\em J. Amer. Math. Soc.}, 27(3):707--752, 2014.


\bibitem[Br08]{Bri08}
T.~Bridgeland.
\newblock Stability conditions on {$K3$} surfaces.
\newblock {\em Duke Math. J.}, 141(2):241--291, 2008.


\bibitem[Bo95]{Bo}
F.~Bottacin.
\newblock Poisson structures on moduli spaces of sheaves over {P}oisson surfaces.
\newblock {\em Invent. Math.}, 121(2):421--436, 1995.

\bibitem[CHW14]{CHW}
I.~Coskun, J.~Huizenga and M.~Woolf
\newblock The effective cone of the moduli space of sheaves on the plane.
\newblock {\em eprint arXiv:1401.1613}.

\bibitem[DP85]{DP}
J.-M. Drezet and J.~Le~Potier.
\newblock Fibr\'es stables et fibr\'es exceptionnels sur {${\bf P}_2$}.
\newblock {\em Ann. Sci. \'Ecole Norm. Sup. (4)}, 18(2):193--243, 1985.


\bibitem[Hi12]{Hi}
N.~Hitchin.
\newblock Deformations of holomorphic {P}oisson manifolds.
\newblock {\em Mosc. Math. J.}, 12(3):567--591, 669, 2012.

\bibitem[HL10]{HL}
D.~Huybrechts and M.~Lehn.
\newblock The geometry of moduli spaces of sheaves.
\newblock {\em Cambridge Mathematical Library}, pages xviii+325. Cambridge University Press, Cambridge, 2010.

\bibitem[Ina02]{Inab}
M.~Inaba.
\newblock Toward a definition of moduli of complexes of coherent sheaves on a projective scheme.
\newblock {\em J. Math. Kyoto Univ.}, 42(2):317--329, 2002.

\bibitem[Ina11]{In}
M.~Inaba.
\newblock Smoothness of the moduli space of complexes of coherent sheaves on an abelian or a projective {$K3$} surface.
\newblock {\em Adv. Math.}, 227(4):1399--1412, 2011.

\bibitem[LZ13]{LZ1}
C.~Li and X.~Zhao.
\newblock The MMP for deformations of Hilbert schemes of points on the projective plane
\newblock {\em arXiv:1312.1748}.


\bibitem[LZ16]{LZ2}
C.~Li and X.~Zhao.
\newblock Birational models of moduli spaces of coherent sheaves on the projective plane
\newblock {\em arXiv:1603.05035}.

\bibitem[Lie06]{Lieb}
M.~Lieblich.
\newblock Moduli of complexes on a proper morphism.
\newblock {\em J. Algebraic Geom.}, 15(1):175--206, 2006.


\bibitem[Mac14]{Maci}
A.~Maciocia.
\newblock Computing the walls associated to {B}ridgeland stability conditions
  on projective surfaces.
\newblock {\em Asian J. Math.}, 18(2):263--279, 2014.


\bibitem[MYY11]{MYY}
H.~Minamide, S.~Yanagida, and K.~Yoshioka.
\newblock  Fourier-mukai transforms and the wall-crossing behavior for Bridgeland’s stability conditions, 2011.
\newblock {\em arXiv:1106.5217}.


\bibitem[Mu84]{Muk}
S.~Mukai.
\newblock Symplectic structure of the moduli space of sheaves on an
              abelian or {$K3$} surface.
\newblock {\em Invent. Math.}, 77(1):101--116, 1984.


\bibitem[Tyu88]{Tyu}
A.N.~Tyurin.
\newblock Symplectic structures on the moduli spaces of vector bundles
              on algebraic surfaces with {$p_g>0$}.
\newblock {\em Izv. Akad. Nauk SSSR Ser. Mat.}, 52(4):813--852, 1988.


\bibitem[Yo12]{Yos}
K.~Yoshioka.
\newblock Bridgeland’s stability and the positive cone of the moduli spaces of stable objects on an abelian surface, 2012.
\newblock {\em arXiv:1206.4838}.

\end{thebibliography}

Chunyi Li \space\space\space\space\space\space\space\space\space Email address: Chunyi.Li@ed.ac.uk 

School of Mathematics, The University of Edinburgh, James Clerk Maxwell Building, The King's Buildings, Mayfield Road, Edinburgh, Scotland EH9 3JZ, United Kingdom\\

Xiaolei Zhao \space\space\space\space\space\space\space\space Email address: X.Zhao@neu.edu 

Department of Mathematics, Northeastern University, 360 Huntington Avenue, Boston, MA 02115, USA

\end{document}